\newtheorem{prop}{Proposition}[section]
\newtheorem{lem}[prop]{Lemma}
\newtheorem{cond}[subsection]{Condition}
\theoremstyle{definition}
\theoremstyle{remark}
\theoremstyle{remark}
\newtheorem{remark}[subsection]{Remark}
\numberwithin{equation}{section}
\renewcommand{\P}{\mathbb{P}}
\renewcommand{\O}{\mathcal{O}}
\renewcommand{\i}{\hat{i}}
\renewcommand{\j}{\hat{j}}
\renewcommand{\k}{\hat{k}}
\newcommand{\pr}{\partial}
\newcommand{\inn}{\{i_1,\dots,i_n\}}
\newcommand{\vin}{v_{\{i_1,\dots,i_n\}}}
\newcommand{\nin}{n_{\{i_1,\dots,i_n\}}}
\newcommand{\ra}{\right\rangle}
\newcommand{\la}{\left\langle}
\newcommand{\Mbar}{\overline{\M}}
\newcommand{\M}{\mathcal{M}}
\newcommand{\C}{\mathcal{C}}
\newcommand{\D}{\mathcal{D}}
\newcommand{\sL}{\mathcal{L}}
\newcommand{\bc}{\mathbf{c}}
\def\<{\left\langle}
\def\>{\right\rangle}
\begin{document}

\title{On computations of genus zero two-point descendant Gromov-Witten invariants}
\author{Amin Gholampour}
\address{Department of Mathematics\\ University of Maryland\\ 1301 Mathematics Building\\ College Park\\ MD 20742\\ USA}
\email{amingh@math.umd.edu}
\author{Hsian-Hua Tseng}
\address{Department of Mathematics\\ Ohio State University\\ 100 Math Tower, 231 West 18th Ave. \\ Columbus \\ OH 43210\\ USA}
\email{hhtseng@math.ohio-state.edu}

\date{\today}

\begin{abstract}
We present a method of computing genus zero two-point descendant Gromov-Witten invariants via one-point invariants. We apply our method to recover some of calculations of Zinger and Popa-Zinger, as well as to obtain new calculations of two-point descendant invariants.
\end{abstract}

\maketitle

\section{Introduction}

Let $X$ be a smooth proper Deligne-Mumford $\mathbb{C}$-stack with projective coarse moduli space.  Genus $0$ two-point descendant Gromov-Witten invariants of $X$ are invariants of the following kind:
\begin{equation}\label{2pt_inv}
\<a\psi^k, b\psi^l\>_{0, 2, \beta}^X:=\int_{[\Mbar_{0,2}(X, \beta)]^{vir}}ev_1^*(a)\psi_1^k ev_2^*(b)\psi_2^l,
\end{equation}
where $a, b\in H^*(IX)$, $k, l\in \mathbb{Z}_{\geq 0}$, and $ev_1, ev_2: \Mbar_{0,2}(X, \beta)\to IX$ are the evaluation maps. We refer to \cite{agv1} for the basics of the construction of Gromov-Witten invariants for Deligne-Mumford stacks. 

Recently exact computations of genus $0$ two-point descendant Gromov-Witten invariants received much attention becaue of mirror symmetry for genus $1$ and open Gromov-Witten invariants. In the case $X=\mathbb{P}^n$ a formula for the invariants (\ref{2pt_inv}) is proved in \cite{z}. Formulas for variants of (\ref{2pt_inv}) involving twists by Euler class and direct sums of line bundles, in the sense of \cite{cg}, are also proven in \cite{z} and \cite{pz} in the toric setting. More recently a formula for the invariants (\ref{2pt_inv}) for compact symplectic toric manifolds is proven in \cite{p}. The proofs in \cite{z, pz, p} follow a strategy that is similar to the one used by Givental in his computation of genus $0$ one-point descendant invariants \cite{g1, g2}. More precisely, a generating function of invariants (\ref{2pt_inv}) is proven by virtual localization to satisfy certain recursion relations and certain regularity conditions. The localization computations needed in \cite{z, pz, p} are somewhat involved.

The purpose of this paper is to discuss a simpler method for explicitly computing (\ref{2pt_inv}). This method is based on a known fact in topological field theory, which relates two-point descendant invariants (\ref{2pt_inv}) to one-point descendant invariants, see (\ref{2pt_to_1pt}). We explain this method in detail in Section \ref{the_method}. In Section \ref{sec:app} we apply this method to compute two-point descendant invariants for several classes of examples.

\subsection*{Convention}
We work over the field of complex numbers. Cohomology groups are taken with rational coefficients. In this paper we only consider cohomology in even degrees.

%\section*{Acknowledgment} 

\section{Method of computation}\label{the_method}
In this Section we present our method for computing two-point descendant invariants (\ref{2pt_inv}). We work in the more general context of {\em twisted orbifold Gromov-Witten theory}, as constructed in \cite{t}.  We briefly recall this theory, following \cite{t} (but using somewhat different notations).
\subsection{Set-up}\label{twisted_theory_setup} 
 Let $X$ be a smooth proper Deligne-Mumford $\mathbb{C}$-stack with projective coarse moduli space. Let $V\to X$ be a complex vector bundle, and $\bc(-)$ a multiplicative invertible characteristic class of vector bundles. Given two integers $g,n\geq 0$ and $\beta\in H_2(X,\mathbb{Z})$, let $\Mbar_{g,n}(X, \beta)$ be the moduli stack of $n$-pointed genus $g$ degree $\beta$ orbifold stable maps to $X$. For each $i=1,...,n$ there is an evaluation map $ev_i: \Mbar_{g,n}(X, \beta)\to IX$, taking values in the inertia stack $IX$ of $X$. Let $\pi: \C\to \Mbar_{g,n}(X,\beta)$ be the universal curve and $f: \C\to X$ the universal orbifold stable map. A key ingredient in the construction of the twisted theory is the following element in the $K$-theory:
\begin{equation}
V_{g,n,\beta}:= R\pi_*f^*V\in K^0(\Mbar_{g,n}(X,\beta)).
\end{equation} 
The $(\bc, V)$-twisted orbifold Gromov-Witten invariants of $X$ are defined by 
\begin{equation}\label{twisted_inv_defn}
\<a_1\psi^{k_1},..., a_n\psi^{k_n}\>_{g,n,\beta}^{X,(\bc, V)}:=\int_{[\Mbar_{g,n}(X, \beta)]^{vir}}\bc(V_{g,n,\beta})\prod_{i=1}^n ev_i^*(a_i)\psi_i^{k_i}.
\end{equation}
Here $k_1,...,k_n\geq 0$ are integers, $a_1,...,a_n\in H^*(IX)$, $$[\Mbar_{g,n}(X,\beta)]^{vir}\in H_*(\Mbar_{g,n}(X,\beta),\mathbb{Q})$$ is the virtual fundamental class, and $\psi_i\in H^2(\Mbar_{g,n}(X, \beta),\mathbb{Q})$ are the descendant classes.
 
\subsection{Reduction to one-point descendant}

Let $\tau\in H^*(IX)$. Consider the linear map $$R(\tau; z_1,z_2): H^*(IX)\to H^*(IX)[[z_1^{-1}, z_2^{-1}]]$$  defined by requiring that for $a, b\in H^*(IX)$ we have 
\begin{equation}
(a, R(\tau; z_1,z_2)(b))_{(\bc, V)}:=(a,b)_{(\bc, V)}+\sum_{\beta}\sum_{n}\frac{Q^\beta}{n!}\<\frac{a}{z_1-\psi}, \tau,...,\tau, \frac{b}{z_2-\psi}\>^{X, (\bc, V)}_{0,n+2, \beta},
\end{equation}
where $(-,-)_{(\bc, V)}$ is the $(\bc, V)$-orbifold Poincar\'e pairing of $X$, as defined in \cite[Section 3.2]{t}, and $Q^\beta$ is an element in the Novikov ring. We can consider $R(\tau; z_1,z_2)$ as a generating function of genus $0$ two-point twisted descendant invariants. 

Consider the linear map $$S(\tau; z): H^*(IX)\to H^*(IX)[[z^{-1}]]$$ defined by requiring that for $a, b\in H^*(IX)$ we have 
\begin{equation}
(a, S(\tau; z)(b))_{(\bc, V)}:= (a,b)_{(\bc,V)}+ \sum_{\beta}\sum_{n}\frac{Q^\beta}{n!}\<a, \tau,...,\tau, \frac{b}{z-\psi}\>^{X,(\bc,V)}_{0,n+2, \beta}.
\end{equation}
Likewise we can consider $S(\tau; z)$ as a generating function of genus $0$ one-point twisted descendant invariants.

\begin{prop}
\begin{equation}\label{2pt_to_1pt}
R(\tau; z_1,z_2)=\frac{1}{z_1+z_2}\left(S(\tau;z_1)^*S(\tau;z_2)-Id\right).
\end{equation}
Here the superscript $*$ indicates the adjoint with respect to the pairing $(-,-)_{(\bc, V)}$. 
\end{prop}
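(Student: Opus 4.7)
The plan is to prove the equivalent identity
\begin{equation*}
(z_1+z_2)\,(a, R(\tau; z_1, z_2)(b))_{(\bc, V)} \;=\; (a, S(\tau; z_1)^* S(\tau; z_2)(b))_{(\bc, V)} - (a, b)_{(\bc, V)}
\end{equation*}
for arbitrary $a, b\in H^*(IX)$, by expanding both sides as generating series of correlators on $\Mbar_{0,n+2}(X,\beta)$ and matching coefficients. The starting point is the purely algebraic decomposition
\begin{equation*}
\frac{z_1+z_2}{(z_1-\psi_1)(z_2-\psi_{n+2})}=\frac{1}{z_1-\psi_1}+\frac{1}{z_2-\psi_{n+2}}+\frac{\psi_1+\psi_{n+2}}{(z_1-\psi_1)(z_2-\psi_{n+2})},
\end{equation*}
which splits the left hand side into three contributions.

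The two ``one-descendant'' contributions are straightforward. The term coming from $1/(z_2-\psi_{n+2})$ is immediately the correlator series defining $(a, (S(\tau; z_2)-Id)(b))_{(\bc, V)}$. The term coming from $1/(z_1-\psi_1)$ carries the descendant on the first mark; using the symmetry of the twisted correlators under reordering of their arguments, combined with the symmetry of the pairing $(-,-)_{(\bc, V)}$ and the defining relation $(x, Sy)=(S^*x, y)$, this matches $(a, (S(\tau; z_1)^*-Id)(b))_{(\bc, V)}$.

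The heart of the proof is the cross term. On $\Mbar_{0,n+2}(X,\beta)$ for $n\geq 1$, I would apply the genus zero topological recursion relation to each of $\psi_1$ and $\psi_{n+2}$, choosing the reference marked points (for instance $\{2, n+2\}$ in the first case and $\{1, 2\}$ in the second) so that marks $1$ and $n+2$ always sit on opposite components of every boundary divisor produced. The splitting/gluing axiom for twisted orbifold Gromov--Witten theory then rewrites each boundary integral as a convolution $\sum_\gamma\langle\cdots e_\gamma\rangle_{\beta_1}\,\langle e^\gamma\cdots\rangle_{\beta_2}$ of two one-point descendant correlators. Summing the two TRR contributions, the binomial coefficients counting distributions of the $n$ internal $\tau$-marks between the two components collapse to the expected factorial weights $1/(m_1!\,m_2!)$, and the cross term reassembles into $(a, (S(\tau; z_1)^*-Id)(S(\tau; z_2)-Id)(b))_{(\bc, V)}$. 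Combining the three pieces and invoking the algebraic identity $S^*S-Id=(S^*-Id)+(S-Id)+(S^*-Id)(S-Id)$ closes the argument.

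The main obstacle I anticipate is the combinatorial bookkeeping in the cross-term step, namely verifying that the sum of the two TRR contributions really reassembles into the correct product of one-point correlators with the correct weights. A secondary subtlety is the degenerate case $n=0$ on $\Mbar_{0,2}(X,\beta)$, where the TRR does not directly apply and one must instead identify $\psi_1+\psi_2$ with the relevant boundary divisor class directly (or argue by pullback along the stabilizing forgetful map $\Mbar_{0,3}(X,\beta)\to\Mbar_{0,2}(X,\beta)$ and subsequent pushforward).
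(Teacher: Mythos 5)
Your outline is correct, but it follows a genuinely different route from the paper. The paper's proof never invokes the topological recursion relation: it inserts an extra fundamental class $1$ into the two-point correlator, uses the string equation to produce the factor $\tfrac{1}{z_1}+\tfrac{1}{z_2}$, and then applies the rational equivalence of boundary divisors in $\Mbar_{0,4}$ (the WDVV relation) to the four marked points carrying $\tfrac{a}{z_1-\psi}$, $1$, $1$, $\tfrac{b}{z_2-\psi}$; a second application of the string equation to each factor of the resulting splitting yields $\tfrac{1}{z_1z_2}S(\tau;z_1)^*S(\tau;z_2)$, with the degenerate $\Mbar_{0,3}(X,0)$ terms supplying the identity operator. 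You instead use the partial-fraction identity for $\tfrac{z_1+z_2}{(z_1-\psi_1)(z_2-\psi_{n+2})}$ together with the divisor relation $\psi_1+\psi_{n+2}=\sum_D[D]$ over boundary divisors separating the two principal marks, obtained by adding two TRRs whose reference points are chosen so that the two lists of divisors are disjoint and jointly exhaustive (a divisor contributes to exactly one TRR according to which side mark $2$ lies on); this is the classical Givental/Lee--Pandharipande derivation of the same identity, and your weight count $\binom{n}{n_1}/n!=1/(n_1!\,n_2!)$ and the algebraic recombination $S^*S-Id=(S^*-Id)+(S-Id)+(S^*-Id)(S-Id)$ are right. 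The trade-off: the paper's exceptional cases are confined to $\Mbar_{0,3}(X,0)$, where the string equation still applies, whereas your route must treat $\Mbar_{0,2}(X,\beta)$ separately, exactly as you note, since the TRR needs two reference marks; the fix via $\psi_1+\psi_2=\sum_D[D]$ on the two-pointed space (proved by comparison along the forgetful map from $\Mbar_{0,3}(X,\beta)$) is standard and closes that gap. One point worth making explicit in your cross-term step: the node insertions produced by the splitting axiom must be paired with the twisted pairing $(-,-)_{(\bc,V)}$ rather than the untwisted orbifold Poincar\'e pairing --- this is precisely why the adjoint in the statement is taken with respect to that pairing --- and once the dual bases are chosen accordingly your factorization into $(S(\tau;z_1)^*-Id)(S(\tau;z_2)-Id)$ goes through as described.
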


This Proposition gives a relationship between the linear maps $R(\tau;z_1,z_2)$ and $S(\tau;z)$. This Proposition is not new. For the sake of completeness we present a proof of this Proposition below.

\begin{proof}
The proof of this Proposition is a straightforward application of the argument that proves WDVV equations.

By string equation, we have 
\begin{equation}\label{2to1_pf_1}
\begin{split}
&\<\frac{a}{z_1-\psi}, \tau,...,\tau, 1, \frac{b}{z_2-\psi}\>_{0,n+3,\beta}^{X,(\bc,V)}\\
=&\frac{1}{z_1}\<\frac{a}{z_1-\psi}, \tau,...,\tau, \frac{b}{z_2-\psi}\>_{0,n+2,\beta}^{X,(\bc,V)}+\<\frac{a}{z_1-\psi}, \tau,...,\tau, \frac{b}{z_2-\psi}\>_{0,n+2,\beta}^{X,(\bc,V)}\frac{1}{z_2}\\
=&\left(\frac{1}{z_1}+\frac{1}{z_2}\right)\<\frac{a}{z_1-\psi}, \tau,...,\tau, \frac{b}{z_2-\psi}\>_{0,n+2,\beta}^{X,(\bc,V)}.
\end{split}
\end{equation}
In the exceptional case $(n, \beta)=(0,0)$ we have 
\begin{equation}\label{2to1_pf_1.5}
\<\frac{a}{z_1-\psi},1,\frac{b}{z_2-\psi}\>_{0,3,0}^{X, (\bc,V)}=\frac{1}{z_1}\frac{1}{z_2}(a,b)_{(\bc, V)}.
\end{equation}
Let $\{\phi_\alpha\}\subset H^*(IX)$ be an additive basis, and $\{\phi^\alpha\}\subset H^*(IX)$ be the dual basis with respect to the pairing $(-,-)_{(\bc, V)}$. The rational equivalence of boundary divisors in $\Mbar_{0,4}$ used in the proof of the WDVV equation gives the following 
\begin{equation}\label{2to1_pf_2}
\begin{split}
&\sum_{n_1+n_2=n, \beta_1+\beta_2=\beta} \sum_{\alpha}\<\frac{a}{z_1-\psi}, \tau,...,\tau, 1, \phi_\alpha\>_{0,n_1+3, \beta_1}^{X,(\bc,V)}\<\phi^\alpha, 1,\tau,...,\tau, \frac{b}{z_2-\psi}\>_{0, n_2+3, \beta_2}^{X,(\bc,V)}\\
=& \sum_{n_1+n_2=n, \beta_1+\beta_2=\beta} \sum_{\alpha}\<\frac{a}{z_1-\psi}, \tau,...,\tau, \frac{b}{z_2-\psi}, \phi_\alpha\>_{0,n_1+3, \beta_1}^{X,(\bc,V)}\<\phi^\alpha, 1,\tau,...,\tau,1\>_{0, n_2+3, \beta_2}^{X,(\bc,V)}\\
=&\<\frac{a}{z_1-\psi}, \tau,...,\tau, \frac{b}{z_2-\psi}, \sum_\alpha (\phi^\alpha, 1)_{(\bc, V)}\phi_\alpha\>_{0,n+3, \beta}^{X,(\bc,V)} \quad\quad (\text{by string equation})\\
=& \<\frac{a}{z_1-\psi}, \tau,...,\tau, \frac{b}{z_2-\psi}, 1\>_{0,n+3, \beta}^{X,(\bc,V)}.
\end{split}
\end{equation}
Again by string equation, we have 
\begin{equation}\label{2to1_pf_3}
\begin{split}
&\<\frac{a}{z_1-\psi}, \tau,...,\tau, 1, \phi_\alpha\>_{0,n_1+3, \beta_1}^{X,(\bc,V)}= \frac{1}{z_1}\<\frac{a}{z_1-\psi}, \tau,...,\tau, \phi_\alpha\>_{0,n_1+2, \beta_1}^{X,(\bc,V)},\\
&\<\phi^\alpha, 1,\tau,...,\tau, \frac{b}{z_2-\psi}\>_{0, n_2+3, \beta_2}^{X,(\bc,V)}=\frac{1}{z_2} \<\phi^\alpha, \tau,...,\tau, \frac{b}{z_2-\psi}\>_{0, n_2+2, \beta_2}^{X,(\bc,V)}, 
\end{split}
\end{equation}
with the exception that
\begin{equation}\label{2to1_pf_3.5}
\<\frac{a}{z_1-\psi}, 1, \phi_\alpha\>_{0,3, 0}^{X,(\bc,V)}=\frac{1}{z_1}(a,\phi_\alpha)_{(\bc, V)}, \quad \<\phi^\alpha, 1, \frac{b}{z_2-\psi}\>_{0, 3, 0}^{X,(\bc,V)}=\frac{1}{z_2}(\phi^\alpha,b)_{(\bc, V)}.
\end{equation}
Combining (\ref{2to1_pf_1})--(\ref{2to1_pf_3.5}) and summing over all possible values of $n$ and $\beta$, we get 
\begin{equation}
\left(\frac{1}{z_1}+\frac{1}{z_2}\right)R(\tau;z_1,z_2)+\frac{1}{z_1z_2}Id=\frac{1}{z_1}\frac{1}{z_2}S(\tau;z_1)^*S(\tau;z_2),
\end{equation}
which is (\ref{2pt_to_1pt}).
\end{proof}

\begin{remark}
\hfill
\begin{enumerate}
\item
To avoid notational complications, (\ref{2pt_to_1pt}) is not stated for equivariant Gromov-Witten invariants. However it is clear from the proof that (\ref{2pt_to_1pt}) is also valid in equivariant Gromov-Witten theory. 

\item
It is easy to see that when $X$ is a compact symplectic toric manifold, the equivariant version of (\ref{2pt_to_1pt}) recovers \cite[Theorem 4.5]{p}.

\item
A formula for genus $0$ {\em multi-point} Gromov-Witten invariants of $\mathbb{P}^n$ is proved in \cite{z2}. It is clear that the method used to prove (\ref{2pt_to_1pt}) can be applied recursively to prove formulas for genus $0$ multi-point Gromov-Witten invariants of more general target space $X$. We do not pursue this here.

\end{enumerate}

\end{remark}

Equation (\ref{2pt_to_1pt}) expresses $R(\tau;z_1,z_2)$ in terms of $S(\tau;z)$. This reduces the computation of $R(\tau;z_1,z_2)$ to the computation of $S(\tau;z)$. 

\subsection{One-point invariants}
As mentioned above, $S(\tau;z)$ can be considered as a generating function of one-point twisted descendant invariants. When considering one-point twisted descendant invariants, the so-called twisted 
$J$-function $J_{X,(\bc, V)}(\tau;z)$ plays an important role: 
\begin{equation}
J_{X, (\bc, V)}(\tau;z):=z+\tau+ \sum_{\beta}\sum_{n}\frac{Q^\beta}{n!}\< 1,\tau,...,\tau, \frac{\phi_\alpha}{z-\psi}\>^{X,(\bc,V)}_{0,n+2, \beta}\phi^{\alpha}.
\end{equation}
It is easy to see that $J_{X,(\bc, V)}(\tau;z)=S(\tau;z)^*(1)$. In other words, the $J$-function is the ``first column'' of $S(\tau;z)^*$. 

In various cases of $X, (\bc, V)$ the {\em small} $J$-function $$J_{X,(\bc, V)}(\tau;z)|_{\tau\in H^0(X)\oplus H^2(X)}$$ is explicitly known. For example, when $X=\mathbb{P}^{n}$ and $(\bc, V)$ is vacuous, the small $J$-function is given by 
\begin{equation}\label{J_Pn}
J_{\mathbb{P}^{n}}(\tau=t_01+tP;z)=ze^{(t_01+Pt)/z}\sum_{d\geq 0}\frac{Q^de^{dt}}{\prod_{k=1}^d(P+kz)^{n+1}},
\end{equation}
where $1\in H^0(\mathbb{P}^{n})$ and $P\in H^2(\mathbb{P}^{n})$ is the hyperplane class. It is evident that $$J_{\mathbb{P}^{n}}(\tau=t_01+tP;z)$$ is not the whole $S(\tau;z)$. But in this case one can check that 
\begin{equation}\label{derivative}
(z\partial/\partial t)^j J_{\mathbb{P}^{n}}(t_01+tP;z)=z\nabla_{P^j}J_{\mathbb{P}^{n}}(\tau;z)|_{\tau=t_01+tP}.
\end{equation}
One can also check that the derivative of the full $J$-function $J_{\mathbb{P}^{n}}$ along the direction of $P^j\in H^{2j}(\mathbb{P}^{n})$,  $\nabla_{P^j}J_{\mathbb{P}^{n}}(\tau;z)$, gives other columns of $S(\tau;z)^*$. Thus by (\ref{J_Pn}) and (\ref{derivative}) we can explicitly compute $S(\tau;z)^*$ for $\tau=t_01+tP$. 

The example above suggests the following way to compute $R(\tau;z_1,z_2)|_{\tau\in H^0(X)\oplus H^2(X)}$.  Suppose that the small $J$-function $J_{X,(\bc, V)}(\tau;z)|_{\tau\in H^0(X)\oplus H^2(X)}$ is explicitly known, and suppose that we can obtain all columns of $S(\tau;z)^*|_{\tau\in H^0(X)\oplus H^2(X)}$ by successive differentiations along $H^2(X)$, then we can obtain an explicit formula for $S(\tau;z)^*|_{\tau\in H^0(X)\oplus H^2(X)}$. Using (\ref{2pt_to_1pt}) we get an explicit formula for $R(\tau;z_1,z_2)|_{\tau\in H^0(X)\oplus H^2(X)}$. Finally, the desired two-point twisted descendant invariants are extracted from $R(\tau;z_1,z_2)|_{\tau\in H^0(X)\oplus H^2(X)}$, after applying string and divisor equations. In the next subsection we set up this computation scheme in more details.

\subsection{Computation scheme}
Let $X, (\bc, V)$ be as in Section \ref{twisted_theory_setup}. Suppose we can find the elements 
$$\{v_i\}_{i=1,\dots,N}$$ in of $H^{*}(IX)$ with the following properties:
\begin{enumerate}
\item There exists a permutation $\hat{1},\dots, \hat{N}$ of $1,\dots,N$ such that the paring satisfies 
$$(v_{\i},v_j)_{(\bc, V)}=m_{i}\delta_{\i j}$$ for a nonzero $m_i$.
\item The restriction of  $\nabla_{v_j}J_{X, (\bc, V)}(\tau;z)$ to $H^2(X)$ is known for each $v_j$. 
\end{enumerate}
Let $S_{ij}(\tau)$ be the component of $\nabla_{v_j}J_{X, (\bc, V)}(\tau;z)|_{\tau\in H^2(X)}$ along $v_i$.

We know that $\nabla_{v_j}J_{X, (\bc, V)}(\tau;z)$ is the $j$-th column of $S(\tau;z)^*$. So for $t\in H^2(X)$, the  $(i,j)$-entry of $S(t;z)^*$ is 
$$(\frac{v_{\i}}{m_{\i}},S(t;z)^*(v_j))_{(\bc,V)}=S_{ij}(t).$$ 
Hence the $(i,j)$-entry of $S(t;z)$ is 
$$(\frac{v_{\i}}{m_{\i}},S(t;z)(v_j))_{(\bc,V)}=\frac{1}{m_{\i}}(v_j,S(t;z)^*(v_{\i}))_{(\bc,V)}=\frac{m_j}{m_i}S_{\j\i}(t).$$ 
From this we get the $(i,j)$-entry of the $R(t;z_1,z_2)$ to be
\begin{equation} \label{equ:kj-entry}
\frac{1}{z_1^{-1}+z_2^{-1}}\left(\sum_k  \frac{m_k}{m_j}S_{ik}S_{\j\k}-\delta^i_j\right).
\end{equation}
After setting $t=0$ in (\ref{equ:kj-entry}), the coefficient of $Q^\beta$ gives the desired two-point $(\bc, V)$-twisted Gromov-Witten invariant
$$\left\langle \frac{v_{\i}}{m_i(z_1-\psi)},\frac{v_k}{z_2-\psi} \right\rangle_{0,2,\beta}^{X,(\bc, V)}.$$

In the next section we investigate some cases that one can find a cohomology basis with the properties above by using mirror theorems. As in the example of $\P^{n-1}$ above, in all of our applications we can find a collection $\{D_1, D_2,...\}$ of first-order linear differential operators with differentiations only along directions in $H^2(X)$ such that for each $1 \le j \le N$ there exists $i_1,\dots, i_n$ such that 
$$\textbf{($\star$)} \quad\quad z\nabla_{v_j}J_{X, (\bc, V)}(\tau;z)|_{\tau\in H^2(X)}=zD_{i_1}\circ  \cdots \circ zD_{i_n} I_{X, (\bc, V)}$$
after the change of variables in the mirror theorem. Here $I_{X, (\bc, V)}$ denotes the Givental $I$-function. Since the $J$-function takes the form $J_{X, (\bc, V)}(\tau;z)=z+\tau+O(z^{-1})$, in order for \textbf{($\star$)} to be true we need to verify the following condition in all our applications.
%The following assumption is needed in order to implement the computation scheme.
\begin{cond} \label{condition}
For each $j$ and $i_1,\dots i_n$ as above the only positive powers of $z$ appearing in $zD_{i_1}\circ \cdots \circ zD_{i_{n-1}}I_{X, (\bc, V)}$ is $Az$ for some $A\in H^*(IX)$.
\end{cond}

\section{Some applications}\label{sec:app}
In this Section we implement the aforementioned computation scheme for weighted projective spaces and some toric manifolds.

\subsection{Weighted projective space $X=\mathbb{P}(w_0,w_1,...,w_n)$} \label{sec:weighted projective spaces}
In this section we mostly follow the notation in \cite{cclt}. Let $P\in H^2(X)$ be the hyperplane class, and let $N = w_0 +\cdots+w_n$. Denote by $\la a\ra$ the fractional part of the rational number $a$. The small $J$-function of the weighted projective spaces was computed in \cite[Theorem 1.7]{cclt}:

\begin{equation*} J_X(t;z)=z e^{Pt/z}\sum_{d \ge 0 ; \; \langle d \rangle \in F}\frac{e^{dt}Q^{d}}{\prod_{i=0}^n\prod_{0<b\le dw_i;  \; \langle b \rangle=\langle dw_i \rangle} (w_iP+bz)}1_{ \langle d \rangle} , 
\end{equation*}
where 
$$F =\{k/w_i :0\le k<w_i \;\;\text{and}\;\; 0\le i \le n\},$$ and $c_1,...,c_N$ are defined to be the sequence obtained by arranging the terms$$\frac{0}{w_0}, \frac{1}{w_0},\dots,\frac{w_0-1}{w_0}, \frac{0}{w_1}, \frac{1}{w_1},\dots,\frac{w_1-1}{w_1},\dots,\frac{0}{w_n}, \frac{1}{w_n},\dots,\frac{w_n-1}{w_n}$$ in increasing order. The connected components of $IX$ are indexed by the elements of $F$. For any $f\in F$ let $1_f$ be the fundamental class of the corresponding component of $IX$. By \cite[Lemma 5.1]{cclt} there exists a basis $\mathcal{B}=\{v_1,\dots, v_N\}$ for $H^*(IX)$ given by $v_j=\sigma_jP^{r_j}1_{c_j}$ where $$\sigma_j=\frac{\prod_{m: c_m<c_j}(c_j-c_m)}{\prod_{i=0}^n\prod_{\tiny \begin{array}{r}b: \langle b \rangle=\langle c_jw_i \rangle\\ 0<b\le c_jw_i\end{array}} b},$$ and $r_j=\#\{i| i<j ,c_i=c_j\}$. Define $$d_j=\#\{i| c_i=c_j\},$$  $$m_j=\prod_{\{i|c_j w_i \in \mathbb{Z}\}}w_i,$$  then the dual basis of $\mathcal{B}$ is given by $\{v^1,\dots, v^N\}$ where $$v^j=\frac{m_j}{\sigma_j}P^{d_j-r_j}1_{\langle 1-c_j \rangle}=\frac{m_j}{\sigma_j\sigma_{\hat{j}}}v_{\hat{j}}.$$ Note that  we define $\hat{j}$ by the second equality above.

We know that $\nabla_{v_j}J_X(\tau;z)$ is the $j$th column of $S(\tau;z)^*$, and by \cite[Lemma 5.1]{cclt} there exist explicitly given linear differential operators $D_1,\dots, D_N$ such that $$\nabla_{v_j}J(\tau;z)|_{\tau=tP}=z^{-1}D_jJ(t;z).$$ So if we denote by $J_X(\tau;z)$ the component of the $J$-function along $v_k$, the  $(k,j)$-entry of $S(tP;z)^*$ is $$\langle v^k,S(tP;z)^*(v_j)\rangle=z^{-1}D_jJ_k(t;z).$$ So the $(k,j)$-entry of $S(tP;z)$ is \begin{align*}\langle v^k,S(tP;z)(v_j)\rangle=\langle v_j,S(tP;z)^*(v^k)\rangle&=\frac{m_k\sigma_j \sigma_{\hat{j}}}{m_{\hat{j}}\sigma_k\sigma_{\hat{k}}}\langle v^{\hat{j}},S(tP;z)^*(v_{\hat{k}})\rangle\\&=\frac{m_k\sigma_j \sigma_{\hat{j}}}{m_{\hat{j}}\sigma_k\sigma_{\hat{k}}}z^{-1}D_{\hat{k}}J_{\hat{j}}(t).\end{align*} From this the $(k,j)$-entry of the $R(t;z_1,z_2)$ is 

\begin{equation} \label{equ:kj-entryI}
\frac{1}{z_1^{-1}+z_2^{-1}}\left(\sum_i \frac{m_i\sigma_j \sigma_{\hat{j}}}{m_{\hat{j}}\sigma_i\sigma_{\hat{i}}}D_iJ_k(t;z_1)D_{\hat{i}}J_{\hat{j}}(t;z_2)-\delta^j_k\right).
\end{equation}
 After setting $t=0$ in (\ref{equ:kj-entryI}), the coefficient of $Q^d$ gives the desired 2-point descendant Gromov-Witten invariant $$\left\langle \frac{v^j}{z_1-\psi_1},\frac{v_k}{z_2-\psi_2} \right\rangle_{0,d}^X.$$

We know from \cite[Proof of Lemma 5.1]{cclt} that 
\begin{equation} \label{equ:DjJ}D_jJ(0;z)=z \sum_{d \ge 0 ; \; \langle d \rangle \in F}\frac{\prod_{m=1}^{j-1}(P+(d-c_m)z)}{\prod_{i=0}^n\prod_{0<b\le dw_i;  \; \langle b \rangle=\langle dw_i \rangle} (w_iP+bz)}1_{ \langle d \rangle} Q^{d-c_j}. 
\end{equation}
So in order to compute the right hand side of (\ref{equ:kj-entryI}), we need to read off the coefficients of $D_iJ$ and $D_{\hat{i}}J$ along the specific basis elements. For this we introduce the new variables $H_1, H_2$ and $x_1, x_2$ keeping tracks of powers of $P$ and the indices of $1_{\langle d \rangle}$ in $D_iJ$ and $D_{\hat{i}}J$, respectively. Now using (\ref{equ:kj-entryI}) and (\ref{equ:DjJ}), we can write:
For $d>0$ we get 
\begin{align*}
&\sum_{j,k=1}^N\langle \frac{v_j}{z_1-\psi_1},\frac{v_k}{z_2-\psi_2} \rangle_{0,d}\frac{m^2_j}{\sigma_j^2 \sigma^2_{\hat{j}}}H_1^{r_j}H_2^{r_k} x_1^{c_j} x_2^{c_k}=\frac{1}{z_1+z_2}\sum_{s=1}^N  \sum_{\tiny \begin{array}{c} d_1,d_2\ge 0 \\ \langle d_1 \rangle, \langle d_2 \rangle  \in F \\ d_1+d_2=d+c_s+c_{\hat{s}}\end{array} }
 \\
&\frac{m_s\prod_{m=1}^{s-1}(H_1+(d_1-c_m)z_1)\prod_{\hat{m}=1}^{\hat{s}-1}(H_2+(d_2-c_{\hat{m}})z_2)}{\sigma_s\sigma_{\hat{s}}\prod_{i=0}^n\prod_{\tiny \begin{array}{l}  0<b_1\le d_1w_s;  \; \langle b_1 \rangle=\langle d_1w_s \rangle \\ 0<b_2 \le d_2w_{\hat{s}};  \; \langle b_2 \rangle=\langle d_2w_{\hat{s}} \rangle \end{array}} (w_s H_1+b_1z_1)(w_{\hat{s}} H_2+b_2z_2)}x_1^{\langle d_1 \rangle }x_2^{\langle 1-\langle d_2 \rangle \rangle }.
\end{align*}
This specializes to \cite[Theorem 1]{z}.

\begin{remark}
Using the mirror theorems stated in \cite{cclt}, our method can be applied to compute the twisted 2-point Gromov-Witten invariants of a complete intersection inside a weighted projective space if it satisfies the Condition \ref{condition}. However, since the mirror theorem usually involves nontrivial change of variables the formulas we get are less explicit. 
\end{remark}

%%%%%%%%%%%%%%%%%%%%%%%%
\subsection{Toric manifolds}
In this Section we discuss applications of the aforementioned method to compute genus $0$ two-point descendant Gromov-Witten invariants of a smooth projective toric variety. 

Let $X$ be a smooth projective toric variety. The (small) $J$-function $J_X$ of $X$ is determined by the toric mirror theorem \cite{g1, g2, i}. How {\em explicitly} the $J$-function of $X$ is determined depends on $X$. If $X$ is Fano (i.e. $-K_X$ is ample), then $J_X$ is equal to the combinatorially defined {\em $I$-function} $I_X$. If $X$ is semi-Fano but not Fano (i.e. $-K_X$ is nef and big but not ample), then $J_X$ is equal to $I_X$ after a change of variables (the inverse mirror map) which is often given by power series with recursively determined coefficients. If $X$ is not semi-Fano, the situation is quite complicated. 

In Section \ref{subsec:toric_mir} we discuss how to use toric mirror theorems to compute the necessary generating function $S(\tau;z)$ for toric manifolds $X$. The outcome is not very explicitly, as it involves some recursively determined quantities. In Section \ref{subsec:fixed_pt} we discuss an approach to yield more explicit formulas in the toric Fano case.

It is worth mentioning that the discussions in this Section in principle works for toric Deligne-Mumford stacks as well, given the appropriate mirror theorem for them (see \cite{ccit}).

\subsubsection{Using mirror theorem in general}\label{subsec:toric_mir}
Let $X$ be a smooth projective toric manifold. According to \cite{g3}, the totality of genus $0$ Gromov-Witten invariants of $X$ can be encoded in a Lagrangian submanifold $\sL_X$ in a suitable symplectic vector space. Following \cite{g2} one can write down a cohomology-valued formal function $I_X(t;z)$ called the {\em I-function} of $X$. The toric mirror theorem in this generality (see \cite{i}) states that the family $$t\mapsto I_X(t;z), t\in H^2(X)$$ lies on $\sL_X$. By general properties of the Lagrangian submanifold $\sL_X$ (see \cite{g3}), this implies that $\sL_X$ (and consequently the genus $0$ Gromov-Witten theory of $X$) is determined by $I_X(t;z)$. On the other hand, the family $$\tau\mapsto J_X(\tau;z)$$ defined by the $J$-function also lies on $\sL_X$. 

Thus it is possible to determine $J_X$ from $I_X$. However in this generality the process of determining $J_X$ from $I_X$ involves {\em Birkhoff factorization}, as explained in \cite[Pages 29--30]{cg}. Moreover, for the computations of two-point Gromov-Witten invariants, we need to determine not only the $J$-function $J_X$, but also other columns of $S(\tau;z)^*$. To do this we need to use the fact that differentiation along any direction in the cohomology $H^*(X)$ can be expressed as a higher-order differential operator involving only differentiations along directions in $H^2(X)$ (this is true because $H^*(X)$ is multiplicatively generated by $H^2(X)$). To summarize, there exists differential operators $P_i, i=1, ..., \text{dim}\, H^*(X)$ involving only differentiations in $H^2(X)$ directions, satisfying the following property: Let $(P_i I_X(t;z))$ be the matrix whose columns are $P_iI_X$. Then there exists a matrix-valued formal series $B(\tau;z)$ in $z$ such that 
\begin{equation}\label{birkhoff_fac}
(P_iI_X(t;z))=S(\tau;z)^*B(\tau;z).
\end{equation}
We refer to \cite[Proposition 5.6]{llw} for more detailed discussions on this.

(\ref{birkhoff_fac}) together with (\ref{2pt_to_1pt}) allow us to express $R(\tau;z_1,z_2)$ as follows:
\begin{equation}\label{toric_2_to_1}
R(\tau;z_1, z_2)= \frac{1}{z_1+z_2}\left((P_iI_X(t;z_1))B(\tau;z_1)^{-1}(B(\tau;z_2)^*)^{-1}(P_iI_X(t;z_2))^*-Id\right).
\end{equation}

Unfortunately, the equation (\ref{toric_2_to_1}) above is not very explicit, because the differential operators, the Birkhoff factorizations, and the generalized mirror map $\tau=\tau(t)$ can only be determined recursively. It may be possible to produce recursive algorithms for computing two-point Gromov-Witten invariants using (\ref{toric_2_to_1}), but we do not pursue it here.

\subsubsection{Fixed point set method}\label{subsec:fixed_pt}
\newcommand{\oneton}{ \{1,\dots,N\}}

Let $X$ be an $n$-dimensional smooth toric variety whose toric fan is generated by the rays $r_1,\dots r_N$. In this section we mostly follow the notation in \cite{g2}. If $\{P_1,\dots, P_k\}$ is a basis for $H^2(X)$ dual to the generators of the semi-group $\Lambda$ of the curve classes in $X$ then in the the equivariant cohomology ring of $X$ the class of the divisor corresponding to the ray $r_j$ is given by $$R_j=\sum_{i=1}^km_{ij}P_i-\lambda_j, \text{   for    } j=1,\dots,N$$  where $\lambda_1,\dots, \lambda_N$ are the equivariant parameters. Note that $n=N-k$, and we can choose the basis $\{P_1,\dots,P_k\}$ so that $(m_{ij})_{j=1,\dots,k}$ is the identity matrix. 
For any $\{i_1,\dots,i_n\} \subset \{1,\dots,N\}$ so that $r_{i_1},\dots,r_{i_n}$ generate a cone in the fan let $$v_{\{i_1,\dots,i_n\}}=R_{i_1}\cdots R_{i_n}$$ be the class of the corresponding fixed point in the equivariant cohomology ring of $X$. Denote by $F$ the set of $\{i_1,\dots,i_n\} \subset \{1,\dots,N\}$ so that $r_{i_1},\dots,r_{i_n}$ generate a cone in the fan. For any $\inn \in F$, let $\nin$ be the equivariant Euler class of the tangent bundle at the corresponding fixed point. This is given by $$\nin=R_{i_1}\cdots R_{i_n}|_{P_1=x_1,\dots,P_k=x_k} $$ where $x_1,\dots,x_k$ uniquely solve the system of equations $$\sum_{i=1}^km_{ij}x_i=\lambda_j \;\;\;\;\;\;\;\ \text{    for    }\;\;\;\;\;\;\;\; j\in\oneton-\inn.$$  Also for the same $\inn \in F$, and any $j\in \inn$ define $${_j}\nin=\frac{R_{i_1}\cdots R_{i_n}}{R_j}|_{P_1=x_1,\dots,P_k=x_k}$$ where  $x_1,\dots,x_k$ are defined as above.

For any $S_1, S_2 \in F$ we have $$v_{S_1}\cdot v_{S_2}=\begin{cases} n_{S_1}v_{S_1} &\text{if} \;\; S_1=S_2 \\ 0 & \text{else}\end{cases}$$ and for any $j\in \oneton$ we have $$R_j=\sum_{\tiny \begin{array}{c} S \in F\\ S \ni j\end{array}}\frac{v_S}{{_j}n_S}$$ in the equivariant cohomology ring. For any $j=1,\dots, N$ define the operator $$D_j=\sum_{i=1}^km_{ij}\frac{\partial}{\partial t_i}-\lambda_j \frac{\partial}{\partial t_0}.$$

The mirror theorem is expressed most simply for the smooth Fano toric variety. In order to make our formulas as explicit as possible for simplicity from now on we assume that $X$ is Fano. For any $\beta \in \Lambda$, let $\beta_i=\int_\beta P_i$, and $R_j(\beta)=\int_\beta R_j$. By the equivariant Mirror theorem \cite{g2} the equivariant small $J$-function of $X$ is given by 
\begin{align*}& J_X(t_0,t_1,\dots,t_k;z)=ze^{(t_0+t_1P_1+\cdots+ t_kP_k)/z}\sum_{\beta\in \Lambda}e^{t_1\beta_1+\cdots+ t_k \beta_k}\prod_{j=1}^N\frac{\prod_{m=-\infty}^{0}(R_j+mz)}{\prod_{m=-\infty}^{R_j(\beta)}(R_j+mz)}.\end{align*} 

We can compute for any nonnegative integer $r$
\begin{align*}& zD_{i_1}\circ\cdots \circ zD_{i_r}J_X(t;z)=ze^{(t_0+t_1P_1+\cdots P_k t_k)/z}\sum_{\beta\in \Lambda}e^{t_1\beta_1+\cdots t_k \beta_k}\times \\&(R_{i_1}+zR_{i_1}(\beta))\cdots(R_{i_r}+zR_{i_r}(\beta))\prod_{j=1}^N\frac{\prod_{m=-\infty}^{0}(R_j+mz)}{\prod_{m=-\infty}^{R_j(\beta)}(R_j+mz)}.\end{align*}

\begin{lem}
If $\dim X \le 3$ then Condition \ref{condition} holds for the operators $D_i$ and the fixed points set basis defined above. 
\end{lem}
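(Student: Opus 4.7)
The plan is to work directly from the explicit formula
$$zD_{i_1}\circ\cdots \circ zD_{i_{n-1}}I_X(t;z)=ze^{\tau/z}\sum_{\beta\in\Lambda}e^{t\cdot\beta}\prod_{k=1}^{n-1}(R_{i_k}+zR_{i_k}(\beta))\prod_{j=1}^N\frac{\prod_{m=-\infty}^{0}(R_j+mz)}{\prod_{m=-\infty}^{R_j(\beta)}(R_j+mz)}$$
given just before the lemma (we may take $I_X$ in place of $J_X$ since $X$ is Fano), and bound the $z$-degree of each $\beta$-term. The $\beta=0$ summand reduces to $z\,e^{\tau/z}\,R_{i_1}\cdots R_{i_{n-1}}$, because the ratio factor is $1$. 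Since $e^{\tau/z}\in 1+z^{-1}H^{*}(X)[[z^{-1}]]$, this contributes exactly the term $Az$ predicted by the Condition, with $A=R_{i_1}\cdots R_{i_{n-1}}$, plus lower powers of $z$.

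For $\beta\neq 0$, set $d=-K_X\cdot\beta\ge 1$ (Fano) and $n_{-}=\#\{j : R_j(\beta)<0\}$. The factor $\prod_{k=1}^{n-1}(R_{i_k}+zR_{i_k}(\beta))$ has $z$-degree at most $n-1$; the ratio factor is a Laurent polynomial in $z$ of exact $z$-degree $-\sum_jR_j(\beta)-n_{-}=-d-n_{-}$ (when $R_j(\beta)<0$, the extra numerator contribution $\prod_{m=R_j(\beta)+1}^{0}(R_j+mz)$ contains the $m=0$ factor $R_j$, which is constant in $z$, accounting for the $-n_{-}$). Multiplying by the outer $z$ and noting that $e^{\tau/z}$ cannot raise $z$-degree, the $\beta$-term has $z$-degree at most $n-d-n_{-}$. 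So the Condition reduces to the combinatorial inequality $n-d-n_{-}\le 1$ for every effective $\beta\neq 0$ when $n=\dim X\le 3$.

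The cases $n=1,2$ follow at once from $d\ge 1$. For $n=3$ we must check $d+n_{-}\ge 2$, and the only case not already covered by $d\ge 2$ or $n_{-}\ge 1$ is $(d,n_{-})=(1,0)$, which forces $R_j(\beta)\ge 0$ for all $j$ together with $\sum_j R_j(\beta)=1$, hence a unique index $j_0$ with $R_{j_0}(\beta)=1$ and $R_j(\beta)=0$ for $j\neq j_0$. Here the plan is to invoke the standard toric linear relation: for each character $m\in M$ the divisor $\sum_j\langle m,r_j\rangle R_j$ is principal and therefore vanishes in $H^{2}(X;\mathbb{Q})$; pairing with $\beta$ gives $\sum_j R_j(\beta)\,r_j=0$ in $N_{\mathbb{Q}}$. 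In our situation this forces $r_{j_0}=0$, contradicting that $r_{j_0}$ is a ray of the fan. Thus $(d,n_{-})=(1,0)$ never occurs, the bound $n-d-n_{-}\le 1$ holds, and the Condition is verified. The only nontrivial step is this last combinatorial exclusion; everything else is routine degree bookkeeping. Observe that the argument breaks sharply at $\dim X=4$, where one can have $d+n_{-}=2$ but still need to kill a $z^{2}$ contribution, so the hypothesis $\dim X\le 3$ is exactly the reach of this direct method.
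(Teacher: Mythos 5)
Your proof is correct, and its overall strategy coincides with the paper's: expand $zD_{i_1}\circ\cdots\circ zD_{i_{n-1}}I_X$ term by term in $\beta$, bound the $z$-degree of $\prod_k(R_{i_k}+zR_{i_k}(\beta))$, and observe that the ratio factor supplies at least $-K_X\cdot\beta+\#\{j:R_j(\beta)<0\}$ powers of $z^{-1}$ (your inequality $d+n_-\ge\dim X-1$ is exactly the criterion the authors record in Remark \ref{rem:geometric condition}). The one place you genuinely diverge is the critical case $\dim X=3$, $(d,n_-)=(1,0)$. The paper never isolates this case: it instead sharpens the numerator bound to $2-\delta_{0,R_{i_1}(\beta)}-\delta_{0,R_{i_2}(\beta)}$, noting that a factor with $R_{i_k}(\beta)=0$ contributes no positive power of $z$, while each $R_{i_k}(\beta)\neq 0$ with all intersection numbers nonnegative contributes at least $1$ to $d=\sum_jR_j(\beta)$, so the required cancellation is automatic. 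You keep the crude bound $n-1$ on the numerator and instead rule out $(d,n_-)=(1,0)$ altogether via the toric relation $\sum_jR_j(\beta)\,r_j=0$, which forbids a nonzero class $\beta$ having exactly one nonvanishing intersection number $R_{j_0}(\beta)=1$. Both routes are valid; the paper's refinement is more local (it uses nothing about the fan beyond $\sum_jR_j=-K_X$) and incidentally also handles the all-nonnegative case in any dimension, whereas your exclusion is a clean structural fact that lets the cruder degree count suffice. A very minor quibble: for $R_j(\beta)\ge0$ the ratio factor is a power series in $z^{-1}$ rather than a Laurent polynomial, but only the upper bound on its $z$-degree is used, so nothing is affected.
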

\begin{proof}
We prove the case $\dim X =3$, the other cases are similar. It suffices to show that the only positive powers of $z$ appearing in $zD_{i_1}\circ zD_{i_{2}}J_X(t;z)$ is $Az$ for some cohomology class $A$. We claim that the power of $1/z$ in the product $$\prod_{j=1}^N\frac{\prod_{m=-\infty}^{0}(R_j+mz)}{\prod_{m=-\infty}^{R_j(\beta)}(R_j+mz)}$$is at least $2-\delta_{0,R_{i_1}(\beta)}-\delta_{0,R_{i_2}(\beta)}$. For any $1\le j \le N$, the power of $1/z$ in  $$\frac{\prod_{m=-\infty}^{0}(R_j+mz)}{\prod_{m=-\infty}^{R_j(\beta)}(R_j+mz)}$$ is at least $R_j(\beta)$ if $R_j(\beta)$ is nonnegative and it is at least $1+R_j(\beta)$ if $R_j(\beta)$ is negative. If for all $1\le j \le N$, $R_j(\beta)\ge 0$ then clearly the claim holds. If for some $1\le j_0 \le N$ we have$R_{j_0}(\beta)< 0$ then $1+\sum_{j=1}^NR_j(\beta)=1-K_X\cdot \beta \ge 2$ because $X$ is Fano by assumption, so again the claim holds and hence the lemma follows.
\end{proof}

\begin{remark} \label{rem:geometric condition}From the proof of Lemma above one can give the following geometric criterion to ensure that Condition \ref{condition} holds for a general smooth Fano toric variety. Let $$j_X=\min_{C\subset X \text{a rational curve}}(-K_X\cdot \beta+\# \{j|R_j(C)<0\}).$$ Then Condition \ref{condition} holds if $j_X \ge \dim X-1$.
\end{remark}

From now on we assume that $X$ is so that Condition \ref{condition} holds for the operators $D_i$ and the fixed points set basis defined above. Then by the construction $v_{\{i_1,\dots,i_n\}}$ is the coefficient of $z$ in $$zD_{i_1}\circ\cdots \circ zD_{i_n} J(t_0,t_1,\dots,t_k;z)$$  for any $\inn \in F$, and moreover, $$z\nabla_{\vin} J(\tau;z)|_{H^0(X)\oplus H^2(X)}=zD_{i_1}\circ\cdots \circ zD_{i_n} J(t_0,t_1,\dots,t_k;z).$$

For given $S_1, S_2 \in F$, by our computation scheme the $(S_1,S_2)$-entry of $R(t;z_1,z_2)$ is given by 
\begin{align*}
&\la \frac{v_{S_1}}{n_{S_1}},R(t;z_1,z_2)v_{S_2} \ra=-\delta^{S_1}_{S_2}+\frac{1}{z_1^{-1}+z_2^{-1}}\times \\&\sum_{\inn \in F} \frac{n_{S_2}}{n_{\inn}}[zD_{i_1}\circ\cdots \circ zD_{i_n} J(t;z)]_{v_{S_1}}[zD_{i_1}\circ\cdots \circ zD_{i_n} J(t;z)]_{v_{S_2}}\end{align*} where $[-]_{v_S}$ is the coordinate along the basis element $v_S$.

For any $S \in F$ introduce the variables $X_S$, $Y_S$ with the relations $$X_{S_1}X_{S_2}=\begin{cases} n_{S_1} X_{S_1} &\text{if} \;\; S_1=S_2 \\ 0 & \text{else}\end{cases}\quad \text{and} \quad Y_{S_1} Y_{S_2}=\begin{cases} n_{S_1} Y_{S_1} &\text{if} \;\; S_1=S_2 \\ 0 & \text{else.}\end{cases}$$ Moreover, for any $j\in \oneton$ define the new variables $U_j$ and $V_j$ by $$U_j=\sum_{\tiny \begin{array}{c} S \in F\\ S \ni j\end{array}}\frac{X_{S}}{{_j}n_S}\quad \text{and}\quad 
V_j=\sum_{\tiny \begin{array}{c} S \in F\\ S \ni j\end{array}}\frac{Y_{S}}{{_j}n_S}.$$

Then for $\beta \in \Lambda- \{0\}$ we get 
\begin{align*}
&\sum_{S_1,S_2\in F}\la \frac{v_{S_1}}{n_{S_1} (z_1-\psi_1)},\frac{v_{S_2}}{n_{S_2} (z_2-\psi_2)} \ra_{0,\beta} X_{S_1}Y_{S_2}=
\\&\frac{1}{z_1+z_2}\sum_{\inn \in F}\frac{1}{n_{\inn}}\sum_{\beta_1+\beta_2=\beta}\prod_{j=1}^n(U_{i_j}+zR_{i_j}(\beta_1))(V_{i_j}+zR_{i_j}(\beta_2))\times\\&\prod_{r=1}^N\frac{\prod_{m=-\infty}^{0}(U_r+mz_1)(V_r+mz_2)}{\prod_{m=-\infty}^{R_r(\beta_1)}(U_r+mz_1)\prod_{m=-\infty}^{R_r(\beta_2)}(V_r+mz_2)}.
\end{align*}

\subsection*{Example: $X=\P^n$}

In this case $N=n+1$, and $H^2(X)$ is generated by the hyperplane class denoted by $P$. It can be easily seen that $X$ satisfies the condition in Remark \ref{rem:geometric condition}. According \cite[Theorem 9.5]{g1} the equivariant $J$-function of $X$ is $$J(t_0,t_1;z)=ze^{(t_0+Pt_1)/z}\sum_{d=0}^{\infty}e^{dt_1}\frac{1}{\prod_{m=1}^d(R_1+mz)\cdots(R_{n+1}+mz)}$$ where $R_j=P-\lambda_j$. In this case $D_j=\frac{\partial}{\partial t_1}-\lambda_j \frac{\partial}{\partial t_0}$, and one can compute

$$zD_{i_1}\circ\cdots \circ zD_{i_n} J(t_0,t_1,;z)=ze^{(t_0+Pt_1)/z}\sum_{d=0}^{\infty}e^{dt_1}\frac{(R_{i_1}-dz)\cdots (R_{i_n}-dz)}{\prod_{m=1}^d(R_1+mz)\cdots(R_{n+1}+mz)}.$$
In this case $F$ is the set of the subets of $\{1,\dots,n+1\}$ with $n$ elements. For any $S \in F$ let $s \in \{1,\dots,n+1\}-S$, then $n_S=\prod_{i \in S}(\lambda_{s}-\lambda_{i})$. %Now for any $S_1, S_2 \in F$ from the we know \begin{align*} \la \frac{v_{S_2}}{n_{S_2}},S(t_1P;z)^*v_{S_1})\ra &=z^{-1}[D_{i_1}\circ\cdots \circ D_{i_n} J(t_0,t_1;z)]_{\vjn}\\&=\frac{1}{\njn}\la \vin,S(t_1P;z)\vjn)\ra \end{align*} where $[-]_{\vin}$ is the coefficient along $\vin$. So $$\la \frac{\vin}{\nin},S(t_1P;z)\vjn)\ra=\frac{\njn}{\nin}z^{-1}[D_{i_1}\circ\cdots \circ D_{i_n} J(t_0,t_1,\dots,t_k;z)]_{\vjn}.$$
%So the $(\sin,\sjn)$-entry  of  $R(t;z_1,z_2)$ is given by 
%\begin{align*} \label{equ:R-entry}
%&\la \frac{\vin}{\nin},R(t,z_1,z_2)\vjn \ra=-\delta^{\sjn}_{\sin}+\frac{1}{z_1^{-1}+z_2^{-1}}\times \\&\sum_{\tiny \begin{array}{l}\kn \subset\\ \nplusone\end{array}} \frac{\njn}{n_{\kn}}[D_{k_1}\circ\cdots \circ D_{k_n} J(t;z)]_{\vin}[D_{k_1}\circ\cdots \circ D_{k_n} J(t;z)]_{\vjn}.\end{align*}
For $d>0$ we get 
\begin{align*}
&\sum_{S_1,S_2 \in F} \la \frac{v_{S_1}}{n_{S_1}( z_1-\psi_1)},\frac{v_{S_2}}{n_{S_2} (z_2-\psi_2)} \ra_{0,d} X_{S_1}Y_{S_2}=\\&
\frac{1}{z_1+z_2}\sum_{\inn \in F}\frac{1}{n_{\inn}}\times \\&\sum_{d_1+d_2=d}\frac{(U_{i_1}-d_1z_1)\cdots (U_{i_n}-d_1z_1)(V_{i_1}-d_2z_2)\cdots (V_{i_n}-d_2z_2)}{\prod_{m=1}^{d_1}(U_1+mz_1)\cdots(U_{n+1}+mz_1)\prod_{m=1}^{d_2}(V_1+mz_2)\cdots(V_{n+1}+mz_2)}.
\end{align*}

%We know that for any $l=0,\dots, n$, $\displaystyle P^l=\sum_{S \in F}\frac{\lambda_{s}^l}{n_S}v_S,$ so 

%\begin{align*}
%&\la \frac{P^{l_1}}{z_1-\psi_1},\frac{P^{l_2}}{z_2-\psi_2} \ra_{0,d} =\lim_{\lambda_0,\dots ,\lambda_n \to 0}
%\\ &  \sum_{\tiny\begin{array}{c}\inn\\ \jn \end{array} \subset \nplusone} \lambda_{\sin}^{l_1} \lambda_{\sjn}^{l_2}\la \frac{\vin}{\nin( z_1-\psi_1)},\frac{\vjn}{\njn (z_2-\psi_2)} \ra_{0,d}.
%\end{align*}
%The limit in the right hand side exists as the left hand side has a nonequivariant limit.
\begin{remark} 
We know that for any $l=0,\dots, n$  $$P^l=\sum_{S \in F}\frac{\lambda_{s}^l}{n_S}v_S,$$ so one can get the ordinary two-point invariants $$\la \frac{P^{l_1}}{z_1-\psi_1},\frac{P^{l_2}}{z_2-\psi_2} \ra_{0,d}$$ from the equivariant two-point invariants above by taking the non-equivariant limits. 
\end{remark}
\begin{remark}
This example can be easily generalized to the case $X=\P^{n_1}\times\cdots \times \P^{n_k}$ for $n_1,\dots, n_k\in \mathbb{Z}_{>0}$. For this first note that $X$ staisfies the condition in Remark \ref{rem:geometric condition}. Now if $P_1,\dots P_k \in H^2(X)$ are the pullbacks of of the hypeplane class from each factor then for any $1\le r\le k$ one can take $$R_{j_r}=P_r-\lambda_{j_r} \;\;\; \text{for} \;\;\;  1\le j_r\le n_r+1$$ and proceed as above to recover the formula in \cite[Theorem 1.1]{p}. 
\end{remark}
\subsection*{Examples of semi-Fano toric manifolds}
We first consider the toric manifold $X_1=\P(\O_{\P^1}\oplus\O_{\P^1}(1)\oplus\O_{\P^1}(1))$. To see if our method works here 
we check the Condition \ref{condition} directly.  Let $P_1$ and $P_2$ be respectively the class of fiber and the unversal divisor on $X$. Then the class of the equivairant divisors are $R_1=P_1-\lambda_1$, $R_2=P_1-\lambda_2$, $R_3=P_2-\lambda_3$, $R_4=P_2-P_1-\lambda_4$, and $R_5=P_2-P_1-\lambda_5$. 
By \cite{g2} the equivariant $I$-function is 
\begin{align*}&ze^{t_0+t_1P_1+t_2P_2/z}\sum_{d_1,d_2=0}^\infty e^{t_1d_1+t_2d_2}\times \\&\frac{\prod_{m=-\infty}^0(R_4+mz)(R_5+mz)}{\prod_{m=1}^{d_1}(R_1+mz)(R_2+mz)\prod_{m=1}^{d_2}(R_3+mz)\prod_{m=-\infty}^{d_2-d_1}(R_4+mz)(R_5+mz)}\end{align*} and coincides with the equivariant small $J$-function.

We have $$D_1=\frac{\pr}{\pr t_1}-\lambda_1\frac{\pr}{\pr t_0}, D_3=\frac{\pr}{\pr t_2}-\lambda_3\frac{\pr}{\pr t_0}, D_4=(\frac{\pr}{\pr t_2}-\frac{\pr}{\pr t_1})-\lambda_4\frac{\pr}{\pr t_0}\;\; \text{etc.}$$ We denote the fraction in the sum above by $I(d_1,d_2)$. If $d_2 \ge d_1$ then up to a constant factor $I(d_1,d_2)$ is $1/z^{3d_2}(1+o(1/z))$, and if $d_2<d_1$ then up to a constant factor $I(d_1,d_2)$ is $1/z^{3d_2+2}(1+o(1/z))$.

By symmetry we only need to check Condition \ref{condition} for  $zD_3\circ zD_1 I$ and $zD_4 \circ zD_1 I$. We can then conclude that $$z\nabla_{R_4R_3R_1}J=zD_4\circ zD_3\circ zD_1 I\quad \text{and}\quad z\nabla_{R_5R_3R_1}J=zD_5\circ zD_3\circ zD_1 I \quad \text{etc.}$$ and so we can follow the rest of the computations in Section \ref{subsec:fixed_pt} for $X_1$ without change.
  We have $$zD_3\circ zD_1 I=ze^{t_0+p_1t_1+p_2t_2/z}\sum_{d_1,d_2=0}^\infty e^{t_1d_1+t_2d_2}(R_1+d_1z)(R_3+d_2z)I(d_1,d_2).$$ By comparing the power of $z$ in $(R_1+d_1z)(R_3+d_2z)$ to the power of $1/z$ in $I(d_1,d_2)$ we see that the condition \ref{condition} holds. A similar analysis shows that the same holds for $zD_4\circ  zD_1 I$.

The second example we consider is $X_2=\P(\O_{\P^1} \oplus\O_{\P^1} \oplus \O_{\P^1}(2))$. As in the previous example we demonstrate how we can check Condition \ref{condition}. Again let $P_1$ and $P_2$ be respectively the class of fiber and the unversal divisor on $X$. Then the class of the equivairant divisors are $R_1=P_1-\lambda_1$, $R_2=P_1-\lambda_2$, $R_3=P_2-\lambda_3$, $R_4=P_2-\lambda_4$, and $R_5=P_2-2P_1-\lambda_5$. By \cite{g2} the equivariant $I$-function is 
\begin{align*}&ze^{t_0+t_1P_1+t_2P_2/z}\sum_{d_1,d_2=0}^\infty e^{t_1d_1+t_2d_2}\times \\&\frac{\prod_{m=\infty}^0(R_5+mz)}{\prod_{m=1}^{d_1}(R_1+mz)(R_2+mz)\prod_{m=1}^{d_2}(R_3+mz)(R_4+mz)\prod_{m=-\infty}^{d_2-2d_1}(R_5+mz)}.\end{align*} This time the miorror transformation involves a nontrivial change of variables. Following the notation in \cite{g2} let $$f(Q)=\sum_{d=1}^\infty\frac{(2d-1)!}{(d!^2)}Q^d.$$ Then we modify our derivation operators in Section \ref{subsec:fixed_pt} according to the mirror transformation: 
\begin{align*}
D_1&=\frac{\pr}{\pr T_1}-\lambda_1\frac{\pr}{\pr t_0}, D_2=\frac{\pr}{\pr T_1}-\lambda_2\frac{\pr}{\pr t_0}, D_3=\frac{\pr}{\pr T_2}-\lambda_3\frac{\pr}{\pr t_0}, D_4=\frac{\pr}{\pr T_2}-\lambda_4\frac{\pr}{\pr t_0}, \\D_5&=\frac{\pr}{\pr T_2}-2\frac{\pr}{\pr T_1}-\lambda_5\frac{\pr}{\pr t_0},\end{align*} where $$\frac{\pr}{\pr T_1}=\frac{1}{1+2e^{t_1}f'(e^{t_1})}\frac{\pr}{\pr t_1}\;\; \text{and}\;\; \frac{\pr}{\pr T_2}=\frac{1}{1-e^{t_2}f'(e^{t_1})}\frac{\pr}{\pr t_2}.$$  

We denote the fraction in the sum above by $I(d_1,d_2)$. If $d_2 \ge 2d_1$ then up to a constant factor $I(d_1,d_2)$ is $1/z^{3d_2}(1+o(1/z))$, and if $d_2<2d_1$ then up to a constant factor $I(d_1,d_2)$ is $1/z^{3d_2+1}(1+o(1/z))$. By symmetry we only check Condition \ref{condition} for $zD_3\circ zD_1 I$ and $zD_4 \circ zD_1 I$. After this we can conclude that $$z\nabla_{R_4R_3R_1}J=zD_4\circ zD_3\circ zD_1 I\quad \text{and} \quad z\nabla_{R_5R_3R_1}J=zD_5\circ zD_3\circ zD_1 I\quad \text{etc.}$$ after the change of variables (see \cite{g2} for the details). So we can follow the rest of computations of Section \ref{subsec:fixed_pt} for $X_2$ as well. We have \begin{align*}&zD_3\circ zD_1 I=\frac{ze^{t_0+P_1t_1+P_2t_2/z}}{(1+2e^{t_1}f'(e^{t_1}))(1-e^{t_2}f'(e^{t_1}))}\sum_{d_1,d_2=0}^\infty e^{t_1d_1+t_2d_2}\times \\&(P_1-(1+2e^{t_1}f'(e^{t_1}))\lambda_1+d_1z)(P_2-(1-e^{t_2}f'(e^{t_1}))\lambda_3+d_2z)I(d_1,d_2).\end{align*} By comparing the power of $1/z$ in $I(d_1,d_2)$ to the power of $z$ in the factor of $I(d_1,d_2) $we can again verify Condition \ref{condition}. Similar analysis shows that the same is true for $zD_4 \circ zD_1 I$.

\begin{remark}
Note that Condition \ref{condition} does not hold for $zD_5\circ zD_1 I$. In fact  \begin{align*} &zD_5\circ zD_1 I=\frac{ze^{t_0+P_1t_1+P_2t_2/z}}{1+2e^{t_1}f'(e^{t_1})}\sum_{d_1,d_2=0}^\infty e^{t_1d_1+t_2d_2}\left(P_1-(1+2e^{t_1}f'(e^{t_1}))\lambda_1+d_1z\right)\times\\&\left(\frac{P_2}{1-e^{t_2}f'(e^{t_1})}-\frac{2P_1}{1+2e^{t_1}f'(e^{t_1})}-\lambda_5+(\frac{d_2}{1-e^{t_2}f'(e^{t_1})}-\frac{2d_1}{1+2e^{t_1}f'(e^{t_1})})z\right)I(d_1,d_2)\\&+\text{other terms}.\end{align*} and one can see that in $I(d_1,0)$ for $d_1>0$ there are terms of $z$-degree equal to -1, and the factor of $I(d_1,0)$ has terms of $z$-degrees 2. This means that $z\nabla_{R_4R_5R_1}J \neq zD_4\circ zD_5\circ zD_1 I$ whereas by the last paragraph $z\nabla_{R_5R_4R_1}J=zD_5\circ zD_4\circ zD_1 I$.
\end{remark}

%%%%%%%%%%%%%%%%%%%%%%%%%%%%%%%%%%%

\end{document}